\documentclass{article}
\usepackage{euscript,amsmath,amssymb,amsfonts,graphicx,bm,amsthm,mathtools, bbm}
  \usepackage{caption}
\usepackage{subcaption}
\usepackage{pdflscape}

\usepackage{array}

\usepackage{cite} 

  \usepackage{paralist}
  \usepackage{graphics} 
\usepackage{soul}

\usepackage{latexsym,amssymb,enumerate,amsmath,amsthm} 
  \usepackage{graphicx} 
  \usepackage{tikz}
 \usepackage[hidelinks]{hyperref}
\usepackage{latexsym,amssymb,enumerate,amsmath} 
\usepackage{pgfplots}
\pgfplotsset{compat=1.18}
\usepackage{soul,xcolor}

\usepackage{lineno}
\newcommand*\patchAmsMathEnvironmentForLineno[1]{%
  \expandafter\let\csname old#1\expandafter\endcsname\csname #1\endcsname
  \expandafter\let\csname oldend#1\expandafter\endcsname\csname end#1\endcsname
  \renewenvironment{#1}%
     {\linenomath\csname old#1\endcsname}%
     {\csname oldend#1\endcsname\endlinenomath}}%
\newcommand*\patchBothAmsMathEnvironmentsForLineno[1]{%
  \patchAmsMathEnvironmentForLineno{#1}%
  \patchAmsMathEnvironmentForLineno{#1*}}%
\AtBeginDocument{%
\patchBothAmsMathEnvironmentsForLineno{equation}%
\patchBothAmsMathEnvironmentsForLineno{align}%
\patchBothAmsMathEnvironmentsForLineno{flalign}%
\patchBothAmsMathEnvironmentsForLineno{alignat}%
\patchBothAmsMathEnvironmentsForLineno{gather}%
\patchBothAmsMathEnvironmentsForLineno{multline}%
}

\newtheorem{theorem}{Theorem}

\newtheorem{lemma}{Lemma}

\theoremstyle{plain}
\theoremstyle{remark}

\theoremstyle{definition}

\newcommand{\dd}{\textup{d}}
\def\eps{\varepsilon}
\def\E{\mathbb{E}}
\def\P{\mathbb{P}}
\def\R{\mathbb{R}}
\newcommand{\Markov}[2]{\underset{#1}{\overset{#2}{\rightleftharpoons}}}
\def\tmin{t_{\min}}
\def\bm{\mathrm{bm}}
\def\disc{\mathrm{disc}}
\def\rtp{\mathrm{rtp}}

\def\Erlang{\text{Erlang}}
\def\when{\text{ when }}
\def\withProb{\text{ with probability }}
\def\wtlambda{\widetilde{\lambda}}
\def\wttau{\widetilde{\tau}}
\def\wtX{\widetilde{X}}
\def\wtTk{\widetilde{T_k}}
\def\mcTk{\mathcal{T}_k}

\begin{document}


\title{Search at bounded speed with immigration}


\author{Hwai-Ray Tung\thanks{Program in Applied and Computational Mathematics, Princeton University, Princeton, NJ 08544 USA} \thanks{Current Affiliation: Department of Mathematics, Texas A\&M University, College Station, TX 77840 USA}
\and
Sean D. Lawley\thanks{Department of Mathematics, University of Utah, Salt Lake City, UT 84112 USA (\texttt{lawley@math.utah.edu}).}
}
\date{\today}
\maketitle

\begin{abstract}
Many biophysical search processes employ searchers which enter or ``immigrate'' into the domain progressively over time. Existing search time estimates can become unphysical for fast immigration since they imply that searchers move at infinite speed. In this paper, we investigate search times of immigrating searchers that move at bounded speed. In the fast immigration limit, we determine the full probability distribution and all the moments of the $k$th search time in terms of the early time probability distribution of a single searcher. We apply these rigorous mathematical results to several canonical models of stochastic search. We further analyze different models of ``diffusion'' and use these results to investigate when and how the minutiae of searcher dynamics affect search times. We compare our theory to numerical simulations.
\end{abstract}


\section{\label{sec:introduction}Introduction}

The timescales of many natural and engineered processes can be formulated in terms of first passage times (FPTs) \cite{grebenkov2024target}. A FPT generically describes the time it takes a random searcher to find a target. Examples include the time it takes a predator to capture a prey, a virus to infect a host, a calcium ion to bind a receptor, and a sperm cell to fertilize an egg. In many systems, including the aforementioned examples, the important timescale is the time it takes the fastest searcher(s) to find the target out of many parallel searchers \cite{lawley2024competition}. Such ``extreme FPTs'' or ``fastest FPTs'' (fFPTs) are commonly modeled by assuming that there are $N\gg1$ searchers initially present in the system \cite{weiss1983, yuste1996, yuste1997, yuste2001, meerson2015}.

More recently, fFPTs have been studied in models where searchers are added progressively over time \cite{campos2024dynamic, tung2025first, grebenkov2025fastest, tung2025passage, meyer2025optimal, linn2026dynamic}. Introduced by Campos and M\'endez \cite{campos2024dynamic}, the simplest model assumes that searchers enter the system (or ``immigrate'' or are ``born'') at a constant Poissonian rate $\lambda>0$. For the prototypical model of diffusive search in which searchers move by Brownian motion, the mean {fFPT} in the fast immigration limit satisfies
\begin{align}\label{eq:fastimmigration0}
    \E[T_1]
    \sim\frac{L^2}{4D\ln(\lambda L^2/(4D))}\quad\text{as }\lambda L^2/D\to\infty,
\end{align}
where $D>0$ is the searcher diffusivity and $L>0$ is the shortest distance a searcher must travel to find the target. Throughout this paper, $f\sim g$ denotes $\lim f/g=1$. 
The asymptotic in \eqref{eq:fastimmigration0} was first shown for pure diffusion processes in one-dimension (1D) by Campos and M\'endez \cite{campos2024dynamic} and later extended to more general diffusion processes in arbitrary spatial dimension in Ref.~\cite{tung2025first}. Rather than continuous diffusion, if searchers move on a discrete network of states (i.e.\ a graph of nodes), then the mean {fFPT} satisfies \cite{tung2025first}
\begin{align}\label{eq:fastimmigration0discrete}
    \E[T_1]
    \sim 
    \frac{B(d)}{{{r}}}\Big(\frac{{{r}}}{\lambda}\Big)^{1/(d+1)}\quad\text{as }\lambda/{{r}}\to\infty,
\end{align}
where searchers jump at rate ${{r}}>0$ between states and $d\ge1$ is the smallest number of jumps a searcher must take to reach the target. If there is a single path of $d\ge1$ states connecting the starting location to the target, then the prefactor is $B(d)=((d+1)!)^{1/(d+1)}\Gamma(1+1/(d+1))\approx (d+1)/e$, where the approximation uses Stirling's formula and is accurate for large $d$.

The physical relevance of \eqref{eq:fastimmigration0} and \eqref{eq:fastimmigration0discrete} might be questioned since these asymptotics imply that searchers can move with unbounded speed. Indeed, if the searcher speed is bounded above by $v<\infty$ and searchers must travel distance $L>0$ to reach the target, then 
\begin{align}\label{eq:bound0}
    T_1\ge \tmin
    :=L/v>0.
\end{align}
However, the asymptotic \eqref{eq:fastimmigration0} contradicts the bound \eqref{eq:bound0} for sufficiently fast immigration, i.e.\ if
\begin{align*}
    \lambda L^2/D>4e^{vL/(4D)}.    
\end{align*}
Similarly, the asymptotic \eqref{eq:fastimmigration0discrete} contradicts the bound \eqref{eq:bound0} for sufficiently large $\lambda/{{r}}$. Therefore, the asymptotics in \eqref{eq:fastimmigration0} and \eqref{eq:fastimmigration0discrete} must fail at sufficiently large immigration rates if searchers move with bounded speed.

The purpose of this paper is to investigate {fFPT}s for the process where searchers immigrate into the system at rate $\lambda>0$ and then move with bounded speed. The bounded speed assumption means that once a searcher enters the system, the additional time it takes that searcher to find the target is a random variable $\tau$ bounded away from zero, i.e.\ 
\begin{align}\label{eq:boundedspeed0}
    \P(\tau<\tmin)=0,
\end{align}
for some strictly positive minimal travel time $\tmin>0$. We assume that searchers are independent after entering the system. Mathematically, the {fFPT} is thus the following minimum,
\begin{align}\label{eq:T10}
    T_1
    =\min_n\bigg\{\tau_n+\frac{1}{\lambda}\sum_{i=1}^n \sigma_i\bigg\},
\end{align}
where $\{\sigma_i\}_{i\ge1}$ is an independent and identically distributed (iid) sequence of unit rate exponential random variables and $\{\tau_n\}_{n\ge1}$ is an iid sequence of the random variable $\tau$ in \eqref{eq:boundedspeed0}. In words, the $n$th time in the set on the right-hand side of \eqref{eq:T10} is the search time after entering the system ($\tau_n$) plus the $n$th immigration time ($\frac{1}{\lambda}\sum_{i=1}^n \sigma_i$). The model formulation is detailed in section~\ref{sec:model}.

In section~\ref{sec:math}, we determine the probability distribution and all the moments of the {fFPT} $T_1$ (and the $k$th {fFPT} $T_k$) in the fast immigration limit (i.e.\ as $\lambda\to\infty$). These rigorous mathematical results are general theorems that depend on the behavior of the probability distribution of $\tau$ near $\tmin$. These results do not fit the mold of classical extreme value theory, since $T_1$ in \eqref{eq:T10} is the minimum of an infinite sequence of random variables which are neither independent nor identically distributed. These results are a rare instance in which extreme value statistics and distributions can be derived for non-iid random variables \cite{majumdar2020}.

In section~\ref{sec:examples}, we apply these results to several examples, including searchers which move by a run-and-tumble process in 1D, two-dimensions (2D), and three-dimensions (3D). We also compare our results to numerical simulations.

In section~\ref{sec:diffusion}, we use these results to investigate the diffusion asymptotic in \eqref{eq:fastimmigration0} for bounded speed searchers. In particular, we consider searchers who move by a bounded speed analog of diffusion (namely, a run-and-tumble process with speed $v=v_0/\eps$ and tumbling rate $r=r_0/\eps^2$ where $0<\eps\ll1$ and $D=v_0^2/(2r_0)$). We find that the asymptotic in \eqref{eq:fastimmigration0} approximates the mean {fFPT} if $1\ll \lambda L^2/D\le \chi_1$, whereas the bounded speed theory of section~\ref{sec:math} describes the {fFPT} for $\lambda L^2/D\gg\chi_2$, for some thresholds $\chi_1\le\chi_2$ which we estimate in section~\ref{sec:diffusion}.  

We conclude with a brief discussion in section~\ref{sec:discussion}. An Appendix collects the proofs of section~\ref{sec:math}, numerical simulation methods, and some technical calculations.

\section{\label{sec:model}Model}
In this section, we detail the model of search with immigration. Searchers are added at a constant Poissonian rate $\lambda$ as in Refs.~\cite{campos2024dynamic, tung2025first}, except we start with zero searchers instead of one. More concretely, if $\{\sigma_i\}_i$ are iid unit rate exponential random variables, then the $n$th searcher to enter the system appears at time $\lambda^{-1}(\sigma_1 + \cdots + \sigma_n)$. This sum follows the Erlang$(n, \lambda)$ distribution, which is a special case of the Gamma distribution. The Erlang has the survival probability
\begin{equation}
    \P(\Erlang(n, \lambda) > t) = \frac{\Gamma(n, \lambda t)}{(n-1)!}, \qquad \Gamma(s, x) = \int_x^\infty z^{s-1}e^{-z}dz,
    \label{eqn:erlangSurv_and_upIncGamma}
\end{equation}
where $\Gamma(s, x)$ is the upper incomplete gamma function.

Each searcher starts at the same initial position (or from a position drawn independently from the same probability distribution of initial positions) and subsequently moves iid. Rather than specifying the initial position and movement dynamics (and domain and target geometry, etc.), we work in terms of the FPT $\tau$ of a single searcher. In particular, if $\{\tau_n\}_{n\ge1}$ is an iid sequence of realizations of $\tau$, then the random variable $\tau_n$ is the length of time between when the $n$th searcher enters the system and when that same searcher finds the target. We stress that all of the information about the search process (initial searcher position, spatial domain, searcher motion, target location(s), etc.)\ is encoded in the random variable $\tau$ (i.e.\ in the probability distribution of $\tau$). The FPT to the target for the entire system is then the random variable $T_1$ defined in \eqref{eq:T10}. More generally, the $k$th FPT $T_k$ is the $k$th minimum of the set on the right-hand side of \eqref{eq:T10}. That is,
\begin{align*}
    T_k
    =\min_n\Big\{\Big\{\tau_n+\frac{1}{\lambda}\sum_{i=1}^n\sigma_i\Big\}\Big\backslash\cup_{j=1}^{k-1}T_j\Big\},\quad k\ge1.
\end{align*}

Throughout this paper, we assume that 
\begin{align}\label{eq:assump1}
    \tau = \begin{cases}
        t_{\min} &\withProb q>0,\\
        t_{\min}(1 + \widetilde{X}) &\withProb 1-q>0,
    \end{cases}
\end{align}
where $q\in(0,1)$, $\tmin>0$, and $\widetilde{X}>0$ is a strictly positive random variable whose short-time distribution satisfies either
\begin{align}\label{eq:assump2}
\begin{split}
    \P(\widetilde{X}<\eps)
    &\sim\alpha\eps^p\quad\text{as }\eps\to0,\\
    \text{or}\quad\P(\widetilde{X}<\eps)
    &\sim\alpha\ln(1/\eps)\eps^p\quad\text{as }\eps\to0,
\end{split}
\end{align}
where $\alpha>0$ and $p>0$. Throughout this paper, ``$f\sim g$'' denotes $f/g\to1$. 

We make several comments about the assumptions in \eqref{eq:assump1}-\eqref{eq:assump2}. First, \eqref{eq:assump1} implies that $\P(\tau<\tmin)=0$ for $\tmin>0$, which is the bounded speed assumption (i.e.\ searchers cannot reach the target in an arbitrarily short time). Second, we have assumed that $q:=\P(\tau=\tmin)>0$, which means that there is a strictly positive probability that $\tau=\tmin$. The case $q=0$ can be handled using the methods of Refs.~\cite{tung2025first, tung2025passage} after shifting time by $\tmin$ (see the \nameref{sec:discussion}). We also assume $q<1$ for most of the paper since $q=1$ is much easier and handled in Section \ref{sec:leading_order}. Third, \eqref{eq:assump1}-\eqref{eq:assump2} involve only the short time behavior because we are investigating the fast immigration limit; many searchers implies the target will be found quickly, which implies only behavior near time $t=t_{\min}$ is relevant. Fourth, as we show in section~\ref{sec:examples}, the assumptions in \eqref{eq:assump1}-\eqref{eq:assump2} cover a broad range of stochastic search processes. 
 
\section{\label{sec:math}Theoretical Results}
We start with an exact expression for the survival probability of the $k$th searcher in Section \ref{sec:exact_formula}. Using this, we obtain the leading order behavior of $T_k$ given in Section \ref{sec:leading_order}. We then give second order approximations of the distribution and moments of $T_k$ in Section \ref{sec:second_order}. 

To simplify the analysis in this section, we shift time by $t_{\min}$ and then rescale time by $t_{\min}$. This nondimensionalization makes the process have constant immigration rate $\widetilde{\lambda}$, searcher behavior $\widetilde{\tau}$, and $k$th fastest searcher FPT $\widetilde{T_k}$ related to the dimensional problem by
\begin{equation}
    \widetilde{\lambda} := \lambda t_{\min}, \quad \widetilde{\tau} := \frac{\tau-t_{\min}}{t_{\min}}, \quad \widetilde{T_k} := \frac{T_k-t_{\min}}{t_{\min}}.
    \label{eqn:rescaled}
\end{equation}
It follows that $\widetilde{\tau}$ is
\begin{equation}
    \widetilde{\tau} = \begin{cases}
        0 &\withProb q,\\
        \widetilde{X}, &\withProb 1-q,
    \end{cases} \quad 
    \P(\widetilde{X}<\eps) \sim \begin{cases}
        \alpha\eps^p \text{ or}, \\
        \alpha \ln(1/\eps)\eps^p.
    \end{cases}
    \label{eqn:tau_adj}
\end{equation}

\subsection{An exact survival probability}
\label{sec:exact_formula}
The rate of searchers immigrating into the system at any nonnegative time $t-s$ is $\wtlambda$. Therefore, the rate of searchers immigrating into the system at time $t-s$ that find the target before time $t$ is the product of the rate of immigration $\wtlambda$ and the probability $\P(\wttau < s)$ that a searcher finds the target within time $s$. As such, by properties of inhomogeneous Poisson point processes, the number of searchers that have found the target before time $t$ is Poisson distributed with mean
$$
\mu(t) := \int_0^t \wtlambda \P(\wttau <  s)ds.
$$
Using \eqref{eqn:tau_adj}, it follows that
\begin{equation}
    \mu(t) = \wtlambda q t + \wtlambda (1-q) \int_0^t \P(\wtX <s) ds.
    \label{eqn:mu}
\end{equation}
Since $\P(\widetilde{T_k} > t)$ is the probability that fewer than $k$ searchers have found the target by time $t$, it follows that
\begin{equation}
\P(\widetilde{T_k} > t) = e^{-\mu(t)}\sum_{i=0}^{k-1}\frac{\mu(t)^i}{i!} = \frac{\Gamma(k, \mu(t))}{(k-1)!}.
\label{eqn:exact_formula}
\end{equation}

\subsection{Leading order behavior}
\label{sec:leading_order}
We give results for convergence in distribution and convergence of moments. We leave the proofs for the Appendix.

For convergence in distribution, \eqref{eqn:tau_adj} implies that $\P(\wttau < t) \ll 1$ for small $t$, and therefore $\mu(t) \approx \wtlambda q t$ for small $t$. This is the mean for a Poisson point process with constant rate $\wtlambda$, which implies $\widetilde{T_k}$ is roughly Erlang$(k, \wtlambda q)$ distributed.  
\begin{theorem}
    $$
    \widetilde{\lambda} q\widetilde{T_k}\rightarrow \Erlang(k, 1)
    $$
    in distribution as $\wtlambda \rightarrow \infty$. Equivalently,
    $$
    \lim_{\wtlambda \rightarrow \infty}\P\left(\wtlambda q\widetilde{T_k} >  x\right) = \frac{\Gamma(k, x)}{(k-1)!}.
    $$
    \label{thm:converge_distribution}
\end{theorem}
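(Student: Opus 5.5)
The plan is to work directly from the exact formula \eqref{eqn:exact_formula}. For fixed $x\ge0$ we have
$$\P\big(\wtlambda q\wtTk > x\big)=\P\big(\wtTk > x/(\wtlambda q)\big)=\frac{\Gamma\big(k,\mu(x/(\wtlambda q))\big)}{(k-1)!}.$$
The map $y\mapsto \Gamma(k,y)/(k-1)! = e^{-y}\sum_{i=0}^{k-1}y^i/i!$ is continuous on $[0,\infty)$. So it suffices to show that
$$\mu\big(x/(\wtlambda q)\big)\to x \quad\text{as } \wtlambda\to\infty,\ \text{for each fixed } x\ge 0.$$
Convergence of the survival functions at every $x$ then gives convergence in distribution to $\Erlang(k,1)$, whose survival function is $\Gamma(k,x)/(k-1)!$. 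The case $x<0$ is trivial, since both sides equal $1$.

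For the key limit, use \eqref{eqn:mu} with $t=x/(\wtlambda q)$:
$$\mu(t)=x+\wtlambda(1-q)\int_0^{t}\P(\wtX<s)\,ds.$$
Bound the remainder crudely by monotonicity of the CDF:
$$0\le \wtlambda(1-q)\int_0^t \P(\wtX<s)\,ds \le \wtlambda(1-q)\,t\,\P(\wtX<t) = \frac{(1-q)x}{q}\,\P\big(\wtX< x/(\wtlambda q)\big).$$
As $\wtlambda\to\infty$ we have $t\to0$, and $\P(\wtX<t)\to \P(\wtX\le 0)=0$ because $\wtX>0$ almost surely. Right-continuity of the CDF gives this directly. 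Alternatively, under either branch of \eqref{eqn:tau_adj} the decay is $\alpha t^p$ or $\alpha\ln(1/t)t^p$ with $p>0$, both of which tend to $0$. Hence the remainder vanishes and $\mu(x/(\wtlambda q))\to x$.

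There is no real obstacle here. The one point needing care is recording that only $\P(\wtX<t)\to0$ is used, not the precise asymptotics in \eqref{eq:assump2}, so the argument is uniform across both branches. The stated $q=1$ case is covered by the same computation, since then the remainder term is identically zero.
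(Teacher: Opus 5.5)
Your proof is correct, but it follows a different route from the paper's. You evaluate the exact formula \eqref{eqn:exact_formula} at the deterministic point $t=x/(\wtlambda q)$ and show $\mu(x/(\wtlambda q))\to x$ via the bound $\wtlambda(1-q)I(t)\le \frac{1-q}{q}(\wtlambda q t)\,\P(\wtX<t)$, then use continuity of $\Gamma(k,\cdot)$.

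The paper instead uses the time-change coupling $\mcTk=\mu(\wtTk)$ with $\mcTk\sim\Erlang(k,1)$. It proves the stronger statement that $\wtlambda q\wtTk\to\mcTk$ in probability (Lemma~\ref{lem:converge_in_prob}). Its key estimate is the same as yours, but evaluated at the random argument $\mcTk$. That forces an extra truncation/tightness step: choosing $t_r$ with $\P(\mcTk>t_r)$ small.

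Your argument avoids both the coupling and the tightness step, so it is more elementary. What the paper's route buys is the pathwise domination $0\le\wtlambda q\wtTk\le\mcTk$ on a common probability space. The paper reuses this immediately for Theorem~\ref{thm:converge_moments} (dominated convergence) and for the second-order moment expansion.

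Your approach can supply the analogous ingredient in distributional form. Since $\mu(t)\ge\wtlambda q t$ and $y\mapsto\Gamma(k,y)$ is decreasing, you get $\P(\wtlambda q\wtTk>x)\le\Gamma(k,x)/(k-1)!$ for all $x\ge0$. Dominated convergence in $\E[Y^m]=\int_0^\infty m x^{m-1}\P(Y>x)\,dx$ with $Y=\wtlambda q\wtTk$ would then recover the moment limits too.
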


One might also expect the moments of $\widetilde{\lambda} q\widetilde{T_k} $ to converge to the moments of an Erlang distribution, and indeed,
\begin{theorem}
    For any nonnegative integer $m$,
    $$
    \lim_{\wtlambda \rightarrow \infty} \E\left[\left(\widetilde{\lambda} q\widetilde{T_k}\right)^m\right] = \frac{(k+m-1)!}{(k-1)!}.
    $$
    \label{thm:converge_moments}
\end{theorem}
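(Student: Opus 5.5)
We will write the moment as an integral of the survival function and pass to the limit by dominated convergence, using Theorem~\ref{thm:converge_distribution} for pointwise convergence. For $m=0$ there is nothing to prove, so take $m\ge1$. Set $Y:=\wtlambda q\widetilde{T_k}\ge0$. Then
\begin{align*}
\E[Y^m]=\int_0^\infty m x^{m-1}\P(Y>x)\,dx
=\int_0^\infty m x^{m-1}\frac{\Gamma\big(k,\mu(x/(\wtlambda q))\big)}{(k-1)!}\,dx,
\end{align*}
using \eqref{eqn:exact_formula}. By Theorem~\ref{thm:converge_distribution}, the integrand converges pointwise to $m x^{m-1}\Gamma(k,x)/(k-1)!$. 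Integrating that limit gives $\int_0^\infty m x^{m-1}\P(\Erlang(k,1)>x)\,dx=\E[\Erlang(k,1)^m]=\Gamma(k+m)/\Gamma(k)=(k+m-1)!/(k-1)!$.

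The key step, and the only real obstacle, is an integrable dominating function that is uniform in $\wtlambda$. Here I would use the monotonicity of the Poisson tail. The function $\mu\mapsto\Gamma(k,\mu)/(k-1)!=\P(\mathrm{Poisson}(\mu)<k)$ is decreasing in $\mu$. By \eqref{eqn:mu}, the second term in $\mu$ is nonnegative, so $\mu(t)\ge\wtlambda q t$. Hence
\begin{align*}
\mu\big(x/(\wtlambda q)\big)\ge x
\quad\text{and}\quad
\P(Y>x)\le\frac{\Gamma(k,x)}{(k-1)!}
\end{align*}
for all $x\ge0$ and all $\wtlambda$. In words, $Y$ is stochastically dominated by an $\Erlang(k,1)$ variable. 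This makes sense because the searchers with $\wttau=0$ alone already form a rate-$\wtlambda q$ Poisson process of arrivals at the target.

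The dominating function $m x^{m-1}\Gamma(k,x)/(k-1)!$ is integrable, since its integral is the finite Erlang moment computed above. Dominated convergence therefore gives $\lim_{\wtlambda\to\infty}\E[Y^m]=(k+m-1)!/(k-1)!$, which proves the theorem. No use of the short-time assumption \eqref{eqn:tau_adj} on $\wtX$ is needed beyond what already enters Theorem~\ref{thm:converge_distribution}. The case $q=1$ is also covered: there the bound is an equality and $Y$ is exactly $\Erlang(k,1)$.
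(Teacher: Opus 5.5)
Your proof is correct. It uses the same domination as the paper: the bound $\mu(t)\ge\wtlambda q t$ from $I\ge0$, i.e.\ the searchers with $\wttau=0$ alone already form a rate-$\wtlambda q$ Poisson stream. The difference is that you deploy this bound on distributions rather than on random variables.

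The paper realizes $\wtTk$ through the time change $\mcTk=\mu(\wtTk)$, where $\mcTk\sim\Erlang(k,1)$ is the $k$th point of a unit-rate Poisson process. This gives the pathwise bound $0\le(\wtlambda q\wtTk)^m\le\mcTk^m$ from \eqref{eqn:ordering}. Lemma~\ref{lem:converge_in_prob} then shows $\wtlambda q\wtTk\to\mcTk$ in probability, and dominated convergence is applied on the probability space in its convergence-in-probability form.

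You instead combine the tail formula $\E[Y^m]=\int_0^\infty mx^{m-1}\P(Y>x)\,dx$, the exact survival function \eqref{eqn:exact_formula}, monotonicity of $\mu\mapsto\Gamma(k,\mu)/(k-1)!$, and the pointwise limit from Theorem~\ref{thm:converge_distribution}. You then apply dominated convergence to Lebesgue measure on $[0,\infty)$. Your stochastic dominance $\P(Y>x)\le\P(\mcTk>x)$ is exactly the distributional version of the paper's coupling inequality.

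Your route is more elementary and self-contained. It needs only convergence in distribution and the exact formula, with no time-change coupling and no convergence-in-probability form of dominated convergence. The paper's coupling buys a common probability space with a pathwise representation. The paper reuses that representation for the second-order moment expansion \eqref{eqn:two_term_moment}, where $\wtTk$ is sandwiched pathwise between $\mu_{\pm}^{-1}(\mcTk/(\wtlambda q))$ in \eqref{eqn:bounding_wtTk}.
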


Note that when $q=1$, $\widetilde{T_k}$ is not just roughly but exactly Erlang$(k, \wtlambda q)$ distributed. Therefore the limits in Theorems \ref{thm:converge_distribution} and \ref{thm:converge_moments} become unnecessary and
$$
\P\left(\wtlambda \widetilde{T_k} >  x\right) = \frac{\Gamma(k, x)}{(k-1)!},\quad \E\left[\left(\widetilde{\lambda} \widetilde{T_k}\right)^m\right] = \frac{(k+m-1)!}{(k-1)!}, \quad \when q=1.
$$

\subsection{Second order terms}
\label{sec:second_order}
The leading order behavior does not depend on $\wtX$. In this section, we incorporate the small time behavior of $\wtX$ to find higher order terms. We first note and justify in the Appendix that
\begin{equation}
    I(t):=\int_0^t \P(\wtX < s) ds \sim \begin{cases}
        \alpha \frac{t^{p+1}}{p+1} &\when \P(\wtX<\eps)\sim \alpha \eps^p \\
        \alpha \frac{\ln(1/t)}{p+1}t^{p+1}  &\when \P(\wtX<\eps)\sim \alpha \ln(1/\eps)\eps^p
    \end{cases}
    \label{eqn:integral_approx}
\end{equation}

For approximating the distribution, it follows from \eqref{eqn:exact_formula} that 
\begin{align*}
    \P\left(\widetilde{T_k} > \frac{x}{\wtlambda}\right) &= \frac{\Gamma(k, \mu(x/\wtlambda))}{(k-1)!}  \\
    &= \frac{1}{(k-1)!} \Gamma\left(k, q x + \wtlambda (1-q) I(x/\wtlambda)\right) \\
    &\approx \frac{\Gamma\left(k, q x\right)}{(k-1)!}  - \frac{(qx)^{k-1}e^{-qx}}{(k-1)!} \wtlambda (1-q) I(x/\wtlambda),
\end{align*}
where the last line results from a linear approximation of the upper incomplete gamma function. Noting that $x/\wtlambda$ is small for sufficiently large $\wtlambda$, one can plug in \eqref{eqn:integral_approx} for a second term. In particular, in terms of dimensional variables, we have the following approximation if $\P(\wtX<t)\sim \alpha t^p$,
\begin{align*}
    P\Big(T_k>\tmin+\frac{y}{q\lambda}\Big)
    \approx\frac{\Gamma(k,y)}{\Gamma(k)}
    -\frac{y^{k}e^{-y}}{\Gamma(k)}\Big(\frac{y}{q\lambda\tmin}\Big)^p\frac{(1-q)\alpha}{q(1+p)}.
\end{align*}
Similarly, if $\P(\wtX<t)\sim \alpha \ln(1/t) t^p$, then
\begin{align*}
    &P\Big(T_k>\tmin+\frac{y}{q\lambda}\Big)
    \approx\frac{\Gamma(k,y)}{\Gamma(k)}-\frac{y^{k}e^{-y}}{\Gamma(k)}\Big(\frac{y}{q\lambda\tmin}\Big)^p\frac{(1-q)\alpha}{q(1+p)}\ln\Big(\frac{q\lambda\tmin}{y}\Big).
\end{align*}

For approximating the moments, we outline here a heuristic derivation of the second term and leave the rigorous derivation in the Appendix. It is well known from the theory of inhomogeneous Poisson point process that we can generate $\{\widetilde{T}_i\}_i$ by taking the points $\{\mathcal{T}_i\}_i$ from a rate $1$ Poisson point process and solving
\begin{equation}
    \mcTk = \mu(\wtTk).
    \label{eqn:transformation}
\end{equation}
When $t \ll \wtlambda$, then 
$$
\mu^{-1}(t) \approx \frac{t}{\wtlambda q} - \frac{1-q}{q}I(t/(\wtlambda q)).
$$
As such,
\begin{align*}
    \E\left[\wtTk^m\right] &= \E\left[(\mu^{-1}(\mcTk))^m\right] \\
    &\approx \E\left[\left(\frac{\mcTk}{\wtlambda q} - \frac{1-q}{q}I(\mcTk/(\wtlambda q))\right)^m\right] \\
    &\approx \E\left[\left(\frac{\mcTk}{\wtlambda q}\right)^m - m\frac{1-q}{q}I(\mcTk/(\wtlambda q))\left(\frac{\mcTk}{\wtlambda q}\right)^{m-1}\right].
\end{align*}
Using \eqref{eqn:integral_approx} and
\begin{equation}
    \E[(\mathcal{T}_k)^n] = \frac{\Gamma\left(k+n\right)}{(k-1)!}, \quad \E[\mathcal{T}_k^n\ln(1/\mcTk)] = -\frac{\Gamma(k+n)}{(k-1)!} \psi(k+n),
    \label{eqn:integrate_mcTk}
\end{equation}
where $n>0$ and $\psi$ is the digamma function leads to
\begin{equation}
    \E\left[(\wtlambda q\wtTk)^m\right] \approx \frac{\Gamma\left(k+m\right)}{(k-1)!} - m\frac{1-q}{q}H(m, \wtlambda q, \alpha, p),
    \label{eqn:two_term_moment}
\end{equation}
where
\begin{equation*}
    H(m, \wtlambda q, \alpha, p) = \frac{\alpha \Gamma\left(k+p+m\right)}{(\wtlambda q)^{p}(k-1)!(p+1)}\begin{cases}
        1 &\when \P(\wtX<t)\sim \alpha \eps^p, \\
        \ln(\wtlambda q)  &\when \P(\wtX<t)\sim \alpha \ln(1/\eps)\eps^p.
    \end{cases}
\end{equation*}

In terms of the dimensional variables, if $\P(\wtX<t)\sim \alpha t^p$, then the approximation in \eqref{eqn:two_term_moment} is
\begin{align*}
    \E[(T_k-\tmin)^m]
    \approx\frac{1}{(q\lambda)^m}\frac{\Gamma(k+m)}{\Gamma(k)}
    -\frac{1}{(q\lambda)^{m+p}}\frac{1}{(\tmin)^p}\frac{m(1-q)\alpha\Gamma(k+p+m)}{q\Gamma(k)(1+p)}.
\end{align*}
Similarly, if $\P(\wtX<t)\sim \alpha \ln(1/t)t^p$, then the approximation in \eqref{eqn:two_term_moment} is
\begin{align*}
    \E[(T_k-\tmin)^m]
    &\approx\frac{1}{(q\lambda)^m}\frac{\Gamma(k+m)}{\Gamma(k)}\\
    &\quad-\frac{1}{(q\lambda)^{m+p}}\frac{1}{(\tmin)^p}\frac{m(1-q)\alpha\Gamma(k+p+m)}{q\Gamma(k)(1+p)}\ln(q\lambda\tmin).
\end{align*}

\section{\label{sec:examples}Examples}

We now illustrate the results of section~\ref{sec:math} in some prototypical examples of stochastic search. We also compare the predictions of the analysis to numerical simulations. The simulation methods are detailed in Appendix~\ref{sec:simulationdetails}.

\subsection{\label{sec:1Drtp}1D run-and-tumble}

Suppose searchers move by a 1D run-and-tumble process with constant speed $v>0$ and tumbling rate $r>0$. In particular, the stochastic position $X(t)$ of a searcher after time $t\ge0$ has elapsed since entering the system satisfies
\begin{align*}
    \frac{\dd}{\dd t}X(t)
    =\begin{cases}
        +v>0 & \text{if }J(t)=+1,\\
        -v<0 & \text{if }J(t)=-1,
    \end{cases}
\end{align*}
where $J(t)\in\{-1,1\}$ is a continuous-time, two-state Markov chain that jumps at rate $r>0$,
\begin{align*}
    -1\Markov{r}{r}1.
\end{align*}
Suppose that searchers enter the system at $X(0)=0$ and suppose that there are targets at $x=\pm L$. The FPT of a searcher $\tau$ after entering the system is thus
\begin{align}\label{eq:tauexample}
    \tau
    :=\inf\{t\ge0:|X(t)|\ge L\}.
\end{align}
It is immediate that
\begin{align}\label{eq:vals1}
    \P(\tau<\tmin)
    &=0,\quad
    \P(\tau=\tmin)
    =q:=e^{-r\tmin}
    =e^{-\kappa},
\end{align}
where
\begin{align}\label{eq:vals2}
    \tmin:=L/v
    ,\quad \kappa
    :=r L/v.
\end{align}
Furthermore, it was shown in section~4.2 of Ref.~\cite{lawley2021pdmp} that 
\begin{align}\label{eq:vals3}
    \P(\tmin<\tau<\tmin(1+\delta))
    \sim(1-q)\alpha\delta^p,\quad\text{as }\delta\to0^+,
\end{align}
where
\begin{align*}
    p
    =1,\quad
    \alpha
    =\frac{\kappa}{2}\Big(\frac{1+\kappa}{e^\kappa-1}\Big).
\end{align*}

\begin{figure}
	\centering
	\includegraphics[width=1\textwidth]{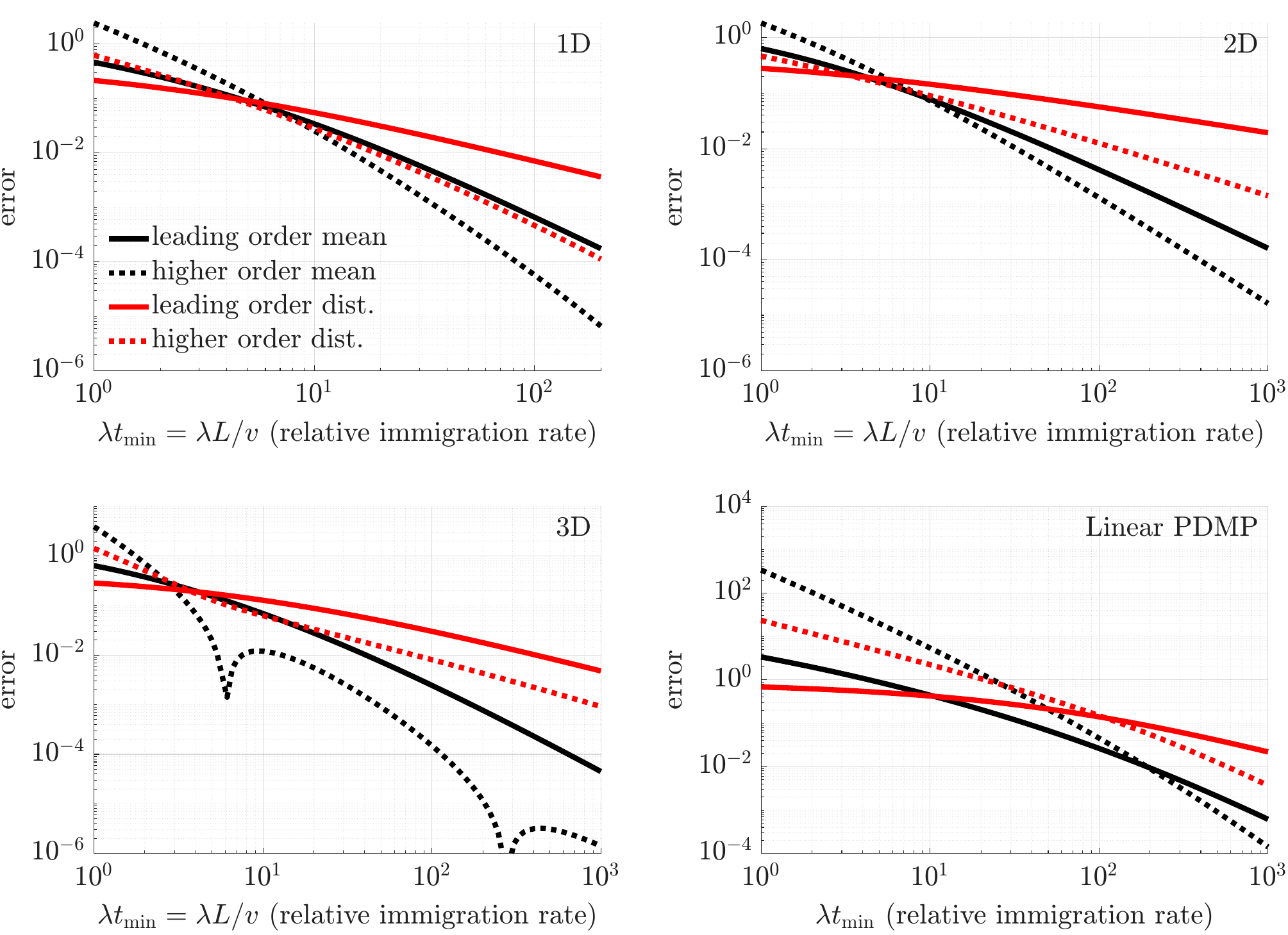}
    \caption{Comparison of theory of section~\ref{sec:math} with stochastic simulations of four search models. The black curves plot the relative error for approximations to the mean {fFPT}. The red curves plot the error for the approximations to the full distribution of the {fFPT}, where the error is the Kolmogorov-Smirnov distance (i.e.\ the maximum absolute difference between the cumulative distribution functions).}
	\label{fig:sims}
\end{figure}

Therefore, we can apply the theory of section~\ref{sec:math} to approximate the moments and distribution of the $k$th {fFPT} $T_k$ in the fast immigration limit ($\lambda\to\infty$). For instance, the mean {fFPT} is approximately
\begin{align}
    \E[T_1]
    &\approx\tmin
    +\frac{1}{q\lambda}
    -\frac{1}{(q\lambda)^2}\frac{1}{\tmin}\frac{\alpha(1-q)}{q}\nonumber\\
    &=\tmin
    +\frac{e^\kappa}{\lambda}
    -\Big(\frac{e^\kappa}{\lambda}\Big)^2r(1+\kappa)/2.\label{eq:1dapprox}    
\end{align}
Similarly, the distribution of the {fFPT} is approximately
\begin{align}
    P\Big(T_1>\tmin+\frac{x}{q\lambda}\Big)
    &\approx
    e^{-x}\Big[1-x^2\frac{(1-q)\alpha}{2q^2\lambda\tmin}\Big]\nonumber\\
    &=
    e^{-x}\Big[1-x^2\frac{e^{\kappa } (1+\kappa) r}{4 \lambda }\Big].\label{eq:1dapproxDist} 
\end{align}

The top left panel of Figure~\ref{fig:sims} plots the relative error in the approximation \eqref{eq:1dapprox}, where the solid black curve is for the first two terms in \eqref{eq:1dapprox} and the dashed black curve is for all three terms in \eqref{eq:1dapprox}. The red curves plots the error in the approximation \eqref{eq:1dapproxDist}, where the solid curve is for the first term in \eqref{eq:1dapproxDist} and the dashed curve is for both terms in \eqref{eq:1dapproxDist}. For the red curves, the error in distribution plotted on the vertical axis is the Kolmogorov-Smirnov distance (i.e.\ the maximum absolute difference between the cumulative distribution functions).

\subsection{\label{sec:2Drtp}2D run-and-tumble}

Suppose searchers move by a 2D analog of the run-and-tumble process considered in section~\ref{sec:1Drtp} above. Specifically, after entering the system, searchers start at the origin of the plane $X(0)=(0,0)\in\R^2$ and move at constant speed $v>0$ in a uniformly chosen random direction. The searcher moves in this initial direction until an exponentially distributed time with mean $1/r$ has elapsed, at which point the searcher chooses a new independent and uniform random direction. This process continues indefinitely. 

Suppose the target is the perimeter of a circle with radius $L$ centered around the origin so that the FPT of a searcher $\tau$ after entering the system is in \eqref{eq:tauexample} (where $|X(t)|$  is now interpreted as the radius of the searcher position at time $t$). It is immediate that \eqref{eq:vals1}-\eqref{eq:vals2} hold. Furthermore, it was shown in section~5 of Ref.~\cite{lawley2021pdmp} that \eqref{eq:vals3} holds with
\begin{align*}
    p
    =1/2,\quad
    \alpha
    =\frac{\kappa\sqrt{2}}{e^{\kappa}-1}.
\end{align*}
We can therefore immediately apply the theory of section~\ref{sec:math} to estimate the moments and distribution of the $k$th {fFPT}. The top right panel of Figure~\ref{fig:sims} plots the relative error in the approximations to the mean and distribution of the {fFPT} $T_1$ (analogously to the top left panel).

\subsection{3D run-and-tumble}

Suppose searchers move by the 3D analog of the run-and-tumble process considered in section~\ref{sec:2Drtp} above. Suppose the target is the entire radius at $L$ so that the FPT $\tau$ after entering the system is in \eqref{eq:tauexample} (where $|X(t)|$  is now interpreted as the radius of the searcher position at time $t$). It is immediate that \eqref{eq:vals1}-\eqref{eq:vals2} hold. Furthermore, it was shown in section~6 of Ref.~\cite{lawley2021pdmp} that
\begin{align*}
    \P(\tmin<\tau<\tmin(1+\delta))
    \sim(1-q)\alpha\ln(1/\delta)\delta,\quad\text{as }\delta\to0^+,
\end{align*}
where
\begin{align*}
    \alpha
    =\frac{\kappa}{e^{\kappa}-1}.
\end{align*}
We can therefore immediately apply the theory of section~\ref{sec:math} to estimate the moments and distribution of the $k$th {fFPT}. The bottom left panel of Figure~\ref{fig:sims} plots the relative error in the approximations to the mean and distribution of the {fFPT} $T_1$ (analogously to the top left panel). 

\subsection{Linear PDMP}

Suppose that $X$ moves according to the following linear PDMP,
\begin{align}\label{eq:linearPDMP}
    \frac{\dd}{\dd t}X(t)
    =\begin{cases}
        -X(t) & \text{if }J(t)=-1,\\
        1-X(t) & \text{if }J(t)=1,
    \end{cases}
\end{align}
where $J(t)\in\{-1,1\}$ is a two-state continuous-time Markov chain that jumps at rate $r>0$. The PDMP \eqref{eq:linearPDMP} has been used to model gene expression \cite{smiley2010gene} and storage systems \cite{boxma2005on}. Suppose $X(0)=1$ and consider the FPT to some threshold $\theta\in(0,1)$,
\begin{align*}
    \tau
    :=\inf\{t\ge0:X(t)\le \theta\}.
\end{align*}
If $p_0:=\P(J(0)=-1)>0$, then 
\begin{align*}
    \tmin
    :=\ln(1/\theta),\quad
    q:=p_0e^{-r\tmin}.
\end{align*}
Furthermore, it was shown in section~7 of Ref.~\cite{lawley2021pdmp} that \eqref{eq:vals3} holds with
\begin{align*}
    p
    =1,\quad
    \alpha
    =\frac{p_0r^2\theta^r(1-\theta)\ln(1/\theta)+(1-p_0)r\theta^r\ln(1/\theta)}{1-q}.
\end{align*}
We can therefore immediately apply the theory of section~\ref{sec:math} to estimate the moments and distribution of the $k$th {fFPT}. The bottom right panel of Figure~\ref{fig:sims} plots the relative error in the approximations to the mean and distribution of the {fFPT} $T_1$ (analogously to the top left panel). 

\section{\label{sec:diffusion}Diffusion and fast immigration limits do not commute}

We now consider three different models of diffusion and investigate their {fFPT}s in the fast immigration limit. The ``diffusion'' models are Brownian motion, a random walk on a discrete state space, and a run-and-tumble process. We find that the {fFPT}s of these processes can be very similar across a broad range of parameters, though the similarity eventually breaks down for sufficiently fast immigration. In this sense, the ``diffusion limit'' and the fast immigration limit do not commute.

\subsection{\label{sec:diff1}``Diffusion'' as Brownian motion}

We start with searchers which move by Brownian motion. Precisely, suppose that the position of a searcher after time $t\ge0$ has elapsed since entering the system is
\begin{align*}
    X_{\bm}(t)
    =\sqrt{2D}\,W(t)\quad t\ge0,
\end{align*}
where $D>0$ is the diffusivity and $W=\{W(t)\}_{t\ge0}$ is a standard Brownian motion. Hence, $X_{\bm}(0)=0$ and $\E[(X_{\bm}(t))^2]=2Dt$ for all $t\ge0$.

Let $\tau_{\bm}$ be the FPT of $X_{\bm}$ to $L>0$,
\begin{align*}
    \tau_{\bm}
    :=\inf\{t\ge0:X_{\bm}(t)>L\}.
\end{align*}
The survival probability of $\tau_{\bm}$ can be written in terms of the error function \cite{carslaw1959},
\begin{align}\label{eq:Sbm}
    S_{\bm}(t)
    :=\P(\tau_{\bm}>t)
    =\mathrm{erf}\bigg(\sqrt{\frac{L^2}{4Dt}}\bigg).
\end{align}
Let $T_{\bm}$ denote the corresponding {fFPT} (i.e.\ $T_{\bm}$ is defined as in \eqref{eq:T10}). Throughout this section, we omit the subscript ``1'' on {fFPT}s to simplify notation. The probability distribution and all the moments of $T_{\bm}$ in the fast immigration limit were obtained in Ref.~\cite{tung2025first}. The mean {fFPT} has the following asymptotic expansion \cite{tung2025first},
\begin{align}
    \E[T_{\bm}]
    &=\frac{C}{(p+2)W}-\frac{\gamma C}{(p+2)^2W(W+1)}+\cdots\label{eq:lambertw}\\
    &\sim\frac{L^2}{4D\ln(\lambda L^2/(4D))}\quad\text{as }\lambda L^2/D\to\infty,\label{eq:leading}
\end{align}
where $\gamma\approx 0.5772$ is the Euler-Mascheroni constant, $C=L^2/(4D)$, $p=1/2$, $A=\sqrt{4D/(L^2\pi)}$, and
\begin{align*}
    W
    =W_0\Big(\frac{1}{p+2}\big(AC^{p+1}\lambda\big)^{\frac{1}{p+2}}\Big),
\end{align*}
where $W_0(z)$ denotes the principal branch of the LambertW function \cite{corless1996}. The leading order asymptotic in \eqref{eq:leading} was first obtained by Campos and M\'endez \cite{campos2024dynamic} and follows from \eqref{eq:lambertw} and the asymptotics of the LambertW function \cite{corless1996}. 

\subsection{\label{sec:diff2}``Diffusion'' on a discrete state space}

We now consider a discrete space analog of the Brownian diffusion process $X_{\bm}$. Discretize space with step size
\begin{align}\label{dx}
\Delta x
=\eps L
\ll L,
\end{align}
where $0<\eps\ll1$. 
Let $\{X_{\disc}(t)\}_{t\ge0}$ be the continuous-time Markov chain on the discrete lattice,
\begin{align}\label{1d}
I
:=\{i\Delta x\}_{i\in\mathbb{Z}}
=\{\dots,-\Delta x,0,\Delta x,\dots\},
\end{align}
which jumps to neighboring lattice points according to
\begin{align*}
    X_{\disc}
    &\to X_{\disc}\pm\Delta x\quad\text{at rate }r:=\frac{D}{(\Delta x)^{2}}
    =\frac{D}{\eps^2 L^{2}}>0.
\end{align*}
Put another way, $X_{\disc}$ spends an exponentially distributed amount of time at its current position with mean $\eps^2 L^2/(2D)$ and then moves $\Delta x$ to the right with probability $1/2$ and $\Delta x$ to the left with probability $1/2$.
Assume $X_{\disc}(0)=0$. It is well-known that $X_{\disc}$ converges to $X_{\bm}$ as $\eps\to0$ \cite{billingsley2013}. The similarity between $X_{\disc}$ and $X_{\bm}$ for $0<\eps\ll1$ is perhaps most easily seen by noticing that the forward Fokker-Planck equation (i.e.\ the master equation) governing the probability density of $X_{\disc}$ is identical to a centered, second order finite difference approximation for the spatial derivative in the Fokker-Planck equation which governs the spatiotemporal of the probability density of $X_{\bm}$.

Define $\tau_{\disc}$ analogously to $\tau_{\bm}$,
\begin{align*}
\tau_{\disc}
&:=\inf\{t>0:X_{\disc}(t)>L\},
\end{align*}
and let $T_{\disc}$ denote the corresponding {fFPT} in \eqref{eq:T10}. 
The survival probability of $\tau_{\disc}$ is
\begin{align}\label{eq:Sdisc}
    S_{\disc}(t)
    :=\P(\tau_{\disc}>t)
    =e^{-2rt}\Big[I_0(2rt)+I_d(2rt)+2\sum_{k=1}^{d-1}I_k(2rt)\Big],
\end{align}
where $I_k$ denotes the modified Bessel function of the first kind and $d\ge1$ is the smallest integer greater than or equal to $L/\Delta x=1/\eps$,
\begin{align*}
    d
    =\lceil L/\Delta x\rceil
    =\lceil 1/\eps \rceil.
\end{align*}
One can verify that the formula for $S_{\disc}(t)$ in \eqref{eq:Sdisc} is correct by checking that it satisfies the appropriate backward Kolmogorov equations (see Appendix~\ref{sec:checkbackward}).

Proposition~1 in Ref.~\cite{lawley2020networks} implies that $\tau_{\disc}$ has the following short-time probability distribution (which can also be checked directly from \eqref{eq:Sdisc}),
\begin{align*}
    \P(\tau_{\disc}\le t)
    \sim \frac{r^d}{d!}t^d\quad\text{as }t\to0^+.
\end{align*}
Therefore, Theorems 3.1 and 3.4 in Ref.~\cite{tung2025first} give the probability distribution and moments of the {fFPT} $T_{\disc}$ in the fast immigration limit. In particular, the mean {fFPT} satisfies
\begin{align*}
    \E[T_{\disc}]
    \sim \frac{B(d)}{{{r}}}\Big(\frac{{{r}}}{\lambda}\Big)^{1/(d+1)}\quad\text{as }\lambda/r=\lambda \eps^2L^2/D\to\infty,
\end{align*}
where the prefactor is $B(d)=((d+1)!)^{1/(d+1)}\Gamma(1+1/(d+1))\approx (d+1)/e$.

\subsection{\label{sec:diff3}``Diffusion'' at bounded speed}

We now consider a bounded speed analog of the diffusion processes $X_{\bm}$ and $X_{\disc}$ considered above. Specifically, let $X_{\rtp}$ be the 1D run-and-tumble process considered in section~\ref{sec:1Drtp} whose constant speed $v$ and tumbling rate $r$ are given by
\begin{align*}
    v
    =\frac{v_0}{\eps},\quad
    r
    =\frac{v^2}{2D}
    =\frac{v_0^2}{2D\eps^2},
\end{align*}
where $0<\eps\ll1$. Assume that $\P(J(0)=1)=1$, which means that $X_{\rtp}$ is initially increasing. Let $\tau_{\rtp}$ be the FPT of $X_{\rtp}$ (analogous to $\tau_{\bm}$ and $\tau_{\disc}$),
\begin{align*}
    \tau_{\rtp}
    =\inf\{t\ge0:X_{\rtp}(t)>L\},
\end{align*}
and let $T_{\rtp}$ be the corresponding {fFPT}. The survival probability of $\tau_{\rtp}$ is \cite{malakar2018steady, grebenkov2026fastest}
\begin{align}\label{eq:Srtp}
    S_{\rtp}(t)
    :=\P(\tau_{\rtp}>t)
    =1-\Theta(t-\tmin)\bigg[e^{-\kappa}+\kappa\int\limits_1^{t/\tmin}\frac{I_1(\kappa\sqrt{z^2-1})}{e^{\kappa z}\sqrt{z^2-1}}\,\dd z\bigg],
\end{align}
where $\Theta(x)=1$ if $x\ge0$ and $\Theta(x)=0$ if $x<0$, $I_1$ is the modified Bessel function of the first kind, $\kappa=r L/v$, and $\tmin=L/v$.

It is well-known that if we fix $v_0>0$ and $D>0$ and take $\eps\to0$, then $X_{\rtp}$ converges to the Brownian diffusion process $X_{\bm}$ \cite{billingsley2013}. 
Nevertheless, the theory of section~\ref{sec:math} implies that if we fix $\eps>0$ and take the fast immigration limit, then the distribution and moments of the {fFPT} $T_{\rtp}$ are governed by the theory of section~\ref{sec:math}. For instance, the mean {fFPT} satisfies as $\lambda\to\infty$ (see section~\ref{sec:1Drtp}),
\begin{align*}
    \E[T_{\rtp}]
    -\tmin
    \sim \frac{e^\kappa}{\lambda}\quad\text{where }\kappa=\frac{rL}{v}=\frac{v_0L}{2D\eps}.
\end{align*}

\subsection{Comparison of three ``diffusion'' processes}

We now investigate the mean {fFPT} for the three ``diffusion'' processes introduced in sections~\ref{sec:diff1}-\ref{sec:diff3}. If $S(t)=\P(\tau>t)$ denotes the survival probability of a FPT $\tau$, then the survival probability of the {fFPT} $T_1$ is
\begin{align*}
    \P(T_1>t)
    =\exp\bigg[-\lambda\int_0^t (1-S(s))\,\dd s\bigg].
\end{align*}
Therefore, the mean {fFPT} can be written in terms of the survival probability of $\tau$,
\begin{align}\label{eq:intrep}
    \E[T_1]
    =\int_0^\infty \P(T_1>t)\,\dd t
    =\int_0^\infty \exp\bigg[-\lambda\int_0^t (1-S(s))\,\dd s\bigg]\,\dd t.
\end{align}

\begin{figure}
	\centering
	\includegraphics[width=1\textwidth]{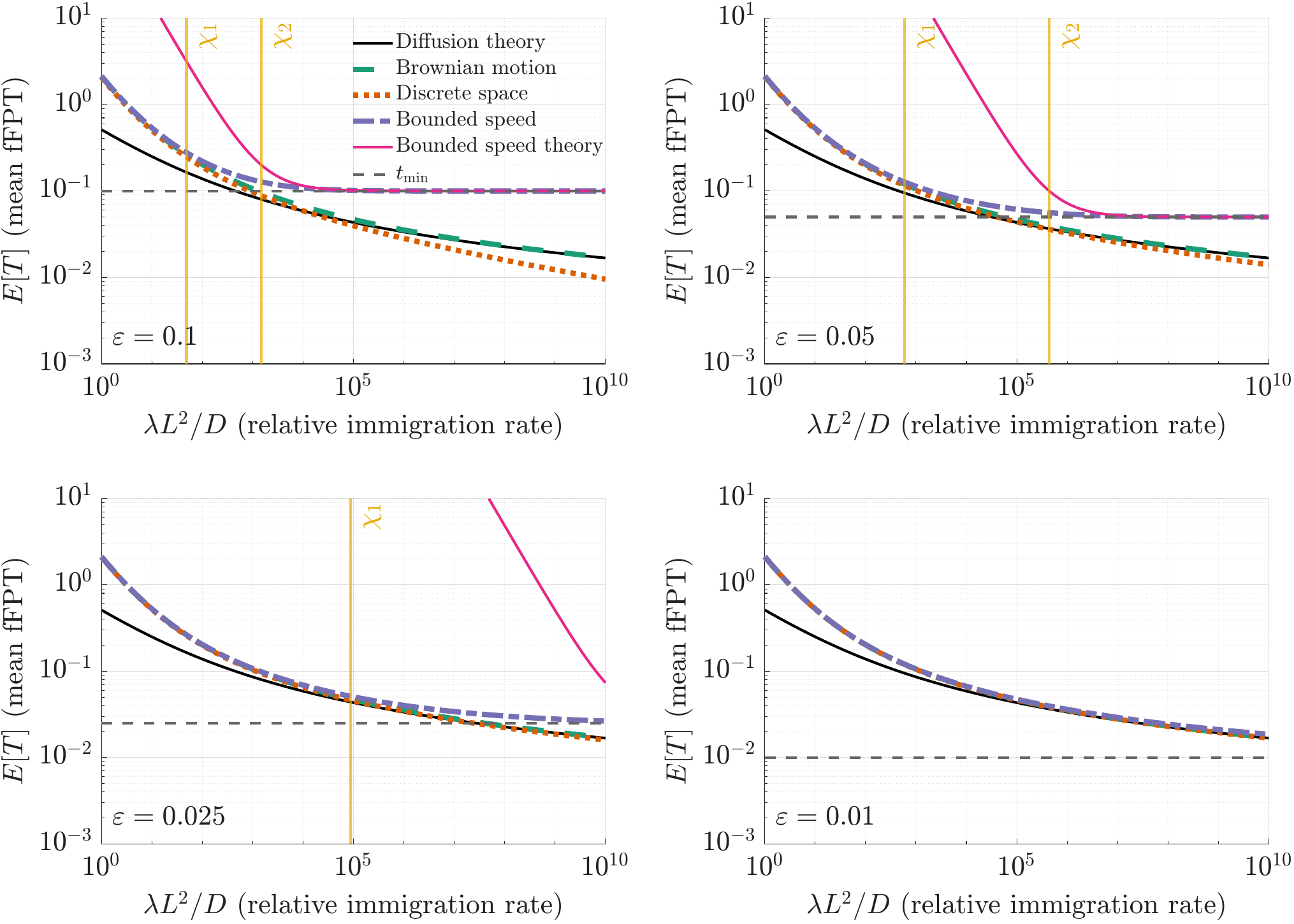}
    \caption{Comparison of the mean {fFPT}s for the three ``diffusion'' processes of sections~\ref{sec:diff1}-\ref{sec:diff3}.}
	\label{fig:diffusion}
\end{figure}

In Figure~\ref{fig:diffusion}, we plot the mean {fFPT} for the three ``diffusion'' processes by numerically computing the integral in \eqref{eq:intrep} for the survival probability $S$ given by either $S_{\bm}$ in \eqref{eq:Sbm}, $S_{\disc}$ in \eqref{eq:Sdisc}, or $S_{\rtp}$ in \eqref{eq:Srtp}. The solid black curve is the approximation in \eqref{eq:lambertw} and the dashed green curve is for the Brownian motion process of section~\ref{sec:diff1} (these two curves are the same in all four panels of Figure~\ref{fig:diffusion}). The dotted orange curve is for the discrete space process of section~\ref{sec:diff2}. The dash-dotted purple curve is the bounded speed process of section~\ref{sec:diff3}, and the dashed horizontal line is the corresponding minimal time $\tmin=L/v=\eps L/v_0$. The solid pink curve is the mean {fFPT} approximation of Theorem~\ref{thm:converge_moments}. The four panels are for different values of $\eps$. 

Notice from Figure~\ref{fig:diffusion} that the {fFPT}s of the three ``diffusion'' processes (and the approximation in \eqref{eq:lambertw}) closely agree across a broad range of immigration rates $\lambda$ if $\eps\ll1$. This agreement eventually breaks down for sufficiently large immigration rate $\lambda$, and then the theory of section~\ref{sec:math} (solid pink curve from Theorem~\ref{thm:converge_moments}) closely approximates $\E[T_{\rtp}]$.

To estimate the parameter regime when $T_{\bm}$ and $T_{\rtp}$ agree, observe that the lower bound $T_{\rtp}\ge \tmin=L/v$ means that the asymptotic in \eqref{eq:leading} can approximate $\E[T_{\rtp}]$ only if
\begin{align*}
    \frac{L^2}{4D\ln(\lambda L^2/(4D))}
    \ge\tmin=\frac{L}{v}
    =\frac{\eps L}{v_0},
\end{align*}
which, upon algebraic rearrangement, is equivalent to
\begin{align}\label{eq:nottoobig}
    \lambda L^2/D
    =2\lambda\tmin \kappa\le\chi_1:=4e^{\kappa/2},\quad \text{where }\kappa
    =\frac{rL}{v}
    =\frac{r_0 L}{v_0\eps}.
\end{align}
To estimate the parameter regime when the theory of section~\ref{sec:math} describes $T_{\rtp}$, we require that the leading order correction to the mean implied by Theorem~\ref{thm:converge_moments} be small, which means $q\lambda\tmin\gg1$, which is equivalent to (using that $q=e^{-\lambda\tmin}=e^{-\kappa}$ and $\tmin=L/v$)
\begin{align}\label{eq:verybig}
    \lambda L^2/D\gg \chi_2:=2\kappa e^\kappa.
\end{align}
The thresholds $\chi_1$ and $\chi_2$ defined in \eqref{eq:nottoobig} and \eqref{eq:verybig} are plotted as vertical lines in Figure~\ref{fig:diffusion}. 
Summarizing, this analysis predicts that if $\eps\ll1$, then (i) $T_{\rtp}$ agrees with the Brownian motion {fFPT} $T_{\bm}$ in the regime of \eqref{eq:nottoobig}, and (ii) $T_{\rtp}$ agrees with the theory of section~\ref{sec:math} in the regime of \eqref{eq:verybig}. Furthermore, in the case of (i), if we also have that $\lambda L^2/D\gg1$, then both $T_{\rtp}$ and $T_{\bm}$ agree with the Brownian motion theory of Ref.~\cite{tung2025first} (the mean of which is given in \eqref{eq:lambertw}-\eqref{eq:leading}).

\section{\label{sec:discussion}Discussion}

In this paper, we investigated the {fFPT}s of stochastic search processes with immigration. We assumed that searchers cannot move arbitrarily fast, which is of course natural for physical applications. Our theory focused on the parameter regime of fast immigration, which is the regime where the bounded speed assumption affects {fFPT}s.

The bounded speed assumption manifested in our analysis as the assumption that the time $\tau$ that it takes a single searcher to find the target after entering the system cannot be smaller than a strictly positive time $\tmin>0$,
\begin{align*}
    \P(\tau<\tmin)=0.
\end{align*}
We assumed that there is a strictly positive probability that $\tau$ attains this minimal time,
\begin{align}\label{eq:qpositive}
    q
    :=\P(\tau=\tmin)>0.
\end{align}
We assumed \eqref{eq:qpositive} because \eqref{eq:qpositive} holds for search by run-and-tumble motion (and for search by piecewise deterministic Markov processes more generally \cite{lawley2021pdmp}).

Furthermore, the case $q=0$ can be handled using the methods of Refs.~\cite{tung2025first, tung2025passage} if we shift time by $\tmin$. To elaborate and compare the $q>0$ versus $q=0$ cases, suppose
\begin{align*}
    \tau = \begin{cases}
        t_{\min} &\withProb q\ge0,\\
        t_{\min}(1 + \widetilde{X}) &\withProb 1-q>0,
    \end{cases}
\end{align*}
where $q\in[0,1)$, $\tmin>0$, and $\widetilde{X}>0$ is a strictly positive random variable whose short-time distribution satisfies
\begin{align*}
    \P(\widetilde{X}<\eps)
    &\sim\alpha\eps^p\quad\text{as }\eps\to0,
\end{align*}
where $\alpha>0$ and $p>0$. If $q>0$, we found in section~\ref{sec:math} the following estimate of the mean of $T_k$ for fast immigration,
\begin{align*}
    \E[T_k]
    &\approx \tmin\bigg[1
    + \frac{k}{q\lambda\tmin}
    - \frac{1}{(q\lambda\tmin)^{p+1}}\frac{(1-q)\alpha\Gamma(k+p+1)}{q\Gamma(k)(p+1)}\bigg].
\end{align*}
If $q=0$, then it follows from the analysis in Refs.~\cite{tung2025first, tung2025passage} that in the fast immigration limit,
\begin{align*}
    \E[T_k]
    &\approx \tmin\bigg[1
    +\Gamma(k+\tfrac{1}{p+1})\Big(\frac{p+1}{\alpha\lambda\tmin}\Big)^{\frac{1}{p+1}}\bigg].
\end{align*}
Hence, if $q>0$, then the leading order correction to $\E[T_k]\approx\tmin$ has order $(q\lambda\tmin)^{-1}$, whereas if $q=0$, then the leading order correction to $\E[T_k]\approx\tmin$ has order $(q\lambda\tmin)^{-1/(p+1)}$.

This paper was motivated by how unbounded speed versus bounded speed models of diffusion must differ in the fast immigration limit (see \eqref{eq:fastimmigration0} and \eqref{eq:fastimmigration0discrete} versus \eqref{eq:bound0} in the \nameref{sec:introduction}). After developing a general mathematical theory (section~\ref{sec:math}) and illustrating it in some numerical examples (section~\ref{sec:examples}), we considered three canonical models of ``diffusion'' (two unbounded speed models and one bounded speed model) and investigated when their {fFPT}s agree and when they differ (section~\ref{sec:diffusion}). 

More broadly, the ubiquity of diffusion processes in nature can be understood as a consequence of the central limit theorem of probability theory. Indeed, the central limit theorem implies that the variegated details of different modes of stochastic motion yield identical Gaussian signatures. However, while the details of the motion do not affect typical stochastic behavior, such minutiae do affect stochastic extremes. The results of the present paper delineate when and how bounded versus unbounded search speeds affect {fFPT}s for search with immigration.

\subsubsection*{Acknowledgments}
SDL was supported by the National Science Foundation (Grant No.\ DMS-2325258). HRT was supported by a gift from William H. Miller III.


\bibliography{library.bib}
\bibliographystyle{abbrv}

\appendix
\section{Proofs}
Throughout the proofs, we use \eqref{eqn:transformation}, duplicated below for convenience. 
$$
\mcTk = \mu(\wtTk) = \wtlambda q \wtTk + \wtlambda (1-q) I(\wtTk)
$$
As $I(t)$ is nonnegative, it follows that
\begin{equation}
    \mcTk \geq \wtlambda q \wtTk
    \label{eqn:ordering}
\end{equation}

We start with leading order results. We first show
\begin{lemma}
    $$\wtlambda q \wtTk \rightarrow \mcTk \text{ in probability}$$
    as $\wtlambda \rightarrow \infty$.
    \label{lem:converge_in_prob}
\end{lemma}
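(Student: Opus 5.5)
We prove the stronger statement $\mcTk - \wtlambda q\wtTk \to 0$ in probability, working on the coupling given by \eqref{eqn:transformation}. Here $\mcTk$ is the $k$th point of a rate-one Poisson process; its law is Erlang$(k,1)$ and does not depend on $\wtlambda$. By that identity,
$$
0\le \mcTk - \wtlambda q\wtTk = \wtlambda(1-q)\,I(\wtTk),
$$
so it suffices to show that $\wtlambda I(\wtTk)\to 0$ in probability.

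\textbf{Step 1: $\wtTk$ is small.} By \eqref{eqn:ordering}, $\wtTk \le \mcTk/(\wtlambda q)$. Since $I$ is nondecreasing (its integrand is nonnegative),
$$
\wtlambda I(\wtTk) \le \wtlambda\, I\big(\mcTk/(\wtlambda q)\big).
$$

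\textbf{Step 2: bound $I$ near zero.} Fix $\eta\in(0,p)$. From \eqref{eqn:integral_approx}, in either the power case or the logarithmic case, there exist $\delta>0$ and $C$ such that
$$
I(t)\le C t^{1+p-\eta}\quad\text{for } 0\le t\le\delta,
$$
because $\ln(1/t)\,t^{p+1}\le t^{1+p-\eta}$ for small $t$. Alternatively, one can avoid \eqref{eqn:integral_approx} altogether: since $\P(\wtX<s)\to0$, we have $I(t)=o(t)$, which is already enough for what follows.

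\textbf{Step 3: conclude.} Fix $M>0$ and work on the event $\{\mcTk\le M\}$. Once $\wtlambda$ is large enough that $M/(\wtlambda q)\le\delta$, Step 2 gives
$$
\wtlambda I\big(\mcTk/(\wtlambda q)\big)\le \wtlambda\, C\,(M/(\wtlambda q))^{1+p-\eta}\to0
$$
deterministically. Using only $I(t)=o(t)$, the same quantity is bounded by $(M/q)\cdot I(t_\lambda)/t_\lambda$ with $t_\lambda=M/(\wtlambda q)\to0$, which again tends to $0$.

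Now fix $\epsilon>0$. Then
$$
\P\big(|\mcTk-\wtlambda q\wtTk|>\epsilon\big)\le \P(\mcTk>M) + \mathbb{1}\Big\{\sup_{x\le M}\wtlambda(1-q)I(x/(\wtlambda q))>\epsilon\Big\}.
$$
The indicator is $0$ for all large $\wtlambda$. The first term does not depend on $\wtlambda$ and can be made arbitrarily small by choosing $M$ large, because $\mcTk$ is almost surely finite. This gives convergence in probability.

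The only delicate point is making sure the coupling is legitimate: $\mu$ is continuous and strictly increasing, so $\wtTk=\mu^{-1}(\mcTk)$ has the correct joint law, and the same realization of $\mcTk$ can be used for every $\wtlambda$. Beyond that, the main content is the uniform $o(t)$ bound on $I$, and that is routine.
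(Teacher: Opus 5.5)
Your proof is correct and follows essentially the same route as the paper. Both use the identity $\mcTk-\wtlambda q\wtTk=\wtlambda(1-q)I(\wtTk)\le\wtlambda(1-q)I(\mcTk/(\wtlambda q))$ (via monotonicity of $I$), then truncate on $\{\mcTk\le M\}$, where the bound vanishes deterministically, with $\P(\mcTk>M)$ small and independent of $\wtlambda$.

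The paper uses exactly your ``alternative'' bound $I(t)\le t\,\P(\wtX<t)=o(t)$, which needs only $\P(\wtX<s)\to0$, rather than the power bound from the asymptotics of $I$. So your fallback is the leaner version of the argument. Also, showing $\mcTk-\wtlambda q\wtTk\to0$ in probability is not a stronger statement; it is the definition of the claim.
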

\begin{proof}
    By \eqref{eqn:transformation}, \eqref{eqn:ordering}, and the monotonicity of $I(t)$
    \begin{align*}
        \mcTk - \wtlambda q \wtTk &= \wtlambda (1-q) I(\wtTk) \\
        &\leq \wtlambda (1-q) I(\mcTk/(\wtlambda q)) 
    \end{align*}
    Substituting $s = u/(\wtlambda q)$ in the integral $I$, this simplifies to 
    $$
    \mcTk - \wtlambda q \wtTk \leq \frac{1-q}{q}\int_0^{\mcTk} \P\left(\wtX < \frac{s}{\wtlambda q}\right) ds \leq \frac{1-q}{q}\mcTk \P\left(\wtX < \frac{\mcTk}{\wtlambda q}\right)
    $$
    
    For any $\eps >0$ and any $r>0$, there exists some $t_r$ such that $\P(\mcTk > t_r) < \eps/r$. As such
    \begin{align*}
        \P(\mcTk - \wtlambda q \wtTk > \eps) &\leq \P\left(\frac{1-q}{q}\mcTk \P\left(\wtX < \frac{\mcTk}{\wtlambda q}\right) > \eps\right) \\
        &= \P\left(\frac{1-q}{q}\mcTk \P\left(\wtX < \frac{\mcTk}{\wtlambda q}\right) > \eps\middle| \mcTk > t_r \right)\P(\mcTk > t_r)\\ 
        &\phantom{=}+ \P\left(\frac{1-q}{q}\mcTk \P\left(\wtX < \frac{\mcTk}{\wtlambda q}\right) > \eps\middle| \mcTk \leq t_r \right)\P(\mcTk \leq t_r) \\
        &\leq \P(\mcTk > t_r)+ \P\left(\frac{1-q}{q}\mcTk \P\left(\wtX < \frac{\mcTk}{\wtlambda q}\right) > \eps\middle| \mcTk \leq t_r \right) \\
        &\leq \frac{\eps}{r} + \P\left(\frac{1-q}{q}t_r \P\left(\wtX < \frac{t_r}{\wtlambda q}\right) > \eps \right)\\
        &< 2\eps/r \text{ for sufficiently large }\wtlambda 
    \end{align*}
    The last line uses that $\P\left(\wtX < s\right) \rightarrow 0$ as $s\rightarrow 0$, which follows from our assumed short time behavior of $\wtX$. Since $r$ can be made arbitrarily large, we have convergence in probability. 
    
\end{proof}

Our leading order results follow swiftly.
\begin{proof}[Proof of Theorem \ref{thm:converge_distribution}]
    Convergence in probability implies convergence in distribution, so the result follows by Lemma \ref{lem:converge_in_prob}.
\end{proof}

\begin{proof}[Proof of Theorem \ref{thm:converge_moments}]
    By the continuous mapping theorem for convergence in probability, for $m\geq 0$
    $$
    (\wtlambda q \wtTk)^m \rightarrow \mcTk^m \text{ in probability}
    $$
    By nonnegativity of $\wtTk$ and \eqref{eqn:ordering}, we have
    $$
    0\leq (\wtlambda q \wtTk)^m \leq \mcTk^m
    $$
    As such, applying the dominated convergence theorem with convergence in probability and using the moments of the Erlang$(k, 1)$ distribution, we find
    $$
    \lim_{\wtlambda \rightarrow \infty} \E\left[\left(\widetilde{\lambda} q\widetilde{T_k}\right)^m\right] = \E\left[\mcTk^m\right] = \frac{(k+m-1)!}{(k-1)!}
    $$
    where $m$ is a nonnegative integer, yielding our desired result.
\end{proof}

Next, we move onto second order terms. As we rigorously justified the second order term in the convergence in distribution, it remains to show \eqref{eqn:integral_approx} and the second order terms for the moments.
\begin{proof}[Proof of \eqref{eqn:integral_approx}]
    By the short time behavior of $\wtX$, for any $\eps > 0$, there exists a $\delta$ such that when $s<\delta$, 
    $$
    (1-\eps)f(s) \leq \P(\wtX < s) \leq (1+\eps)f(s)
    $$
    where $f(s)$ is the function in \eqref{eqn:tau_adj} (ie $\alpha s^p$ or $\alpha \ln(1/s)s^p$). As such, when $t < \delta$,
    $$
    \left|I(t) - \int_0^t f(s) ds\right| =\left| \int_0^t \P(\wtX < s) - f(s) ds\right| \leq \eps \int_0^t f(s) ds
    $$
    Dividing through by the integral of $f$ then yields
    $$
    I(t) \sim \int_0^t f(s) ds
    $$
    Integrating $f$ by hand or with computer software yields 
    $$
    \int_0^t f(s) ds = \begin{cases}
        \alpha \frac{t^{p+1}}{p+1} &\when \P(\wtX<\eps)\sim \alpha \eps^p \\
        \alpha \frac{1+(p+1)\ln(1/t)}{(p+1)^2}t^{p+1} \sim \alpha \frac{\ln(1/t)}{p+1}t^{p+1}  &\when \P(\wtX<\eps)\sim \alpha \ln(1/\eps)\eps^p
    \end{cases}
    $$
    which is the desired result.
\end{proof}

Before proving the higher order terms, we first prove a few algebraic results that will be useful

\begin{lemma}
    Let $E$ be the event where $\mcTk/(\wtlambda q)<\delta$ for some $\delta<1$. Then
    \begin{equation}
         \lim_{\wtlambda \rightarrow \infty}\frac{\E[\mcTk^m|E^c]\P(E^c)}{(\wtlambda q \delta)^{m+k-1} e^{-\wtlambda q \delta}} = \frac{1}{(k-1)!}
         \label{eqn:organize1}
    \end{equation}
    \begin{equation}
         \lim_{\wtlambda \rightarrow \infty}\frac{\E[\mcTk^m|E]\P(E) - \frac{(m+k-1)!}{(k-1)!}}{-\frac{(\wtlambda q \delta)^{m+k-1} e^{-\wtlambda q \delta}}{(k-1)!}} = 1
         \label{eqn:organize2}
    \end{equation}

    \begin{equation}
         \lim_{\wtlambda \rightarrow \infty}\frac{\E\left[\left(\frac{\mcTk}{\wtlambda q}\right)^{m-1}\int_0^{\mcTk/(\wtlambda q)} f(s) ds \middle| E \right]\P(E)}{H(m, \wtlambda q, \alpha, p)/(\wtlambda q)^m} = 1
         \label{eqn:organize4}
    \end{equation}
    \label{lem:organize}
\end{lemma}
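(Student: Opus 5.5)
All three limits reduce to explicit integrals against the Erlang$(k,1)$ density $g(u)=u^{k-1}e^{-u}/(k-1)!$ of $\mcTk$. Throughout, set $a:=\wtlambda q\delta$, so that $E=\{\mcTk<a\}$ and $a\to\infty$ as $\wtlambda\to\infty$ with $\delta$ fixed.

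\textbf{Limit \eqref{eqn:organize1}.} We have
$$
\E[\mcTk^m|E^c]\P(E^c)=\frac{1}{(k-1)!}\int_a^\infty u^{m+k-1}e^{-u}\,du=\frac{\Gamma(m+k,a)}{(k-1)!}.
$$
The standard large-argument asymptotic $\Gamma(s,a)\sim a^{s-1}e^{-a}$ as $a\to\infty$ then gives the claim. If we want this self-contained, one integration by parts writes $\Gamma(s,a)=a^{s-1}e^{-a}+(s-1)\Gamma(s-1,a)$. Iterating finitely many times, all the remaining terms are $O(a^{s-2}e^{-a})$.

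\textbf{Limit \eqref{eqn:organize2}.} Since $\E[\mcTk^m|E]\P(E)+\E[\mcTk^m|E^c]\P(E^c)=\E[\mcTk^m]=(m+k-1)!/(k-1)!$ by \eqref{eqn:integrate_mcTk}, we get
$$
\E[\mcTk^m|E]\P(E)-\frac{(m+k-1)!}{(k-1)!}=-\E[\mcTk^m|E^c]\P(E^c).
$$
The limit \eqref{eqn:organize2} is therefore an immediate restatement of \eqref{eqn:organize1}.

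\textbf{Limit \eqref{eqn:organize4}.} Write $F(t):=\int_0^t f(s)\,ds$. In the power case $F(t)=\alpha t^{p+1}/(p+1)$ exactly, so the numerator equals
$$
\frac{\alpha}{(p+1)(\wtlambda q)^{m+p}}\,\E\big[\mcTk^{m+p};E\big].
$$
Applying the splitting above with $m$ replaced by $m+p$, we have $\E[\mcTk^{m+p};E]=\Gamma(k+m+p)/(k-1)!$ minus an exponentially small term of order $a^{m+p+k-1}e^{-a}$. This matches $H/(\wtlambda q)^m$ exactly in the limit.

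In the log case I use the closed form computed in the proof of \eqref{eqn:integral_approx},
$$
F(t)=\alpha\,\frac{1+(p+1)\ln(1/t)}{(p+1)^2}\,t^{p+1}.
$$
With $t=\mcTk/(\wtlambda q)$ we have $\ln(1/t)=\ln(\wtlambda q)+\ln(1/\mcTk)$. The numerator then becomes
$$
\frac{\alpha}{(p+1)(\wtlambda q)^{m+p}}\,\E\Big[\mcTk^{m+p}\Big(\ln(\wtlambda q)+\ln(1/\mcTk)+\tfrac{1}{p+1}\Big);E\Big].
$$
On $E$ we have $t<\delta<1$, so $F$ is the correct nonnegative expression there. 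I would expand this into three pieces:
\begin{itemize}
\item the $\ln(\wtlambda q)$ piece equals $\ln(\wtlambda q)\,\Gamma(k+m+p)/(k-1)!$ up to an exponentially small error;
\item the constant piece is $O(1)$;
\item the $\ln(1/\mcTk)$ piece converges to $-\Gamma(k+m+p)\psi(k+m+p)/(k-1)!$ by \eqref{eqn:integrate_mcTk}, again up to the truncation error, and so is also $O(1)$.
\end{itemize}
After dividing by $H/(\wtlambda q)^m$, which carries the factor $\ln(\wtlambda q)$, the ratio tends to $1$.

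\textbf{Main obstacle.} The only delicate point is controlling the truncation to $E$ for the logarithmic integrand $\E[\mcTk^{m+p}\ln(1/\mcTk);E^c]$. On $E^c$ we have $\mcTk\ge a\ge1$, so $|\ln\mcTk|\le\mcTk$. Hence this tail is bounded by $\Gamma(m+p+k+1,a)/(k-1)!$, which is $O(a^{m+p+k}e^{-a})\to0$. This is negligible against the $\ln(\wtlambda q)$ leading term.
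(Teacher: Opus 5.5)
Your proposal is correct and follows the paper's proof essentially step for step. You get \eqref{eqn:organize1} from the incomplete-Gamma asymptotic $\Gamma(m+k,a)\sim a^{m+k-1}e^{-a}$ with $a=\wtlambda q\delta$, \eqref{eqn:organize2} from the complement identity, and \eqref{eqn:organize4} by integrating $f$ in closed form and reducing to Gamma-type integrals against the Erlang density. The only difference is in execution. The paper rescales to $u=\mcTk/(\wtlambda q)$ and applies Watson's lemma, formally differentiating in the exponent for the logarithmic case. You instead compute the truncated moments exactly as complete Gamma values minus incomplete-Gamma tails, and split $\ln(\wtlambda q/\mcTk)=\ln(\wtlambda q)+\ln(1/\mcTk)$. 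This yields the same $\ln(\wtlambda q)-\psi(k+m+p)$ structure with a somewhat more transparent justification.
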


\begin{proof}[Proof of Lemma \ref{lem:organize}]
    For \eqref{eqn:organize1}, it is a well known property of the upper incomplete Gamma function that
    \begin{equation}
        \lim_{x\rightarrow \infty}\frac{\Gamma(s, x)}{x^{s-1}e^{-x}} = 1
        \label{eqn:upper_inc_gamma_asymptotics}
    \end{equation}
    As such, using properties of the Erlang distribution,
    \begin{align*}
        \frac{1}{{(\wtlambda q \delta)^{m+k-1} e^{-\wtlambda q \delta}}}\E[\mcTk^m|E^c]\P(E^c) &= \frac{1}{{(\wtlambda q \delta)^{m+k-1} e^{-\wtlambda q \delta}}}\int_{\wtlambda q \delta}^\infty s^m \frac{s^{k-1}e^{-s}}{(k-1)!}ds \\
        &= \frac{1}{{(\wtlambda q \delta)^{m+k-1} e^{-\wtlambda q \delta}}}\frac{\Gamma(m+k, \wtlambda q \delta)}{(k-1)!}\\
        &\rightarrow \frac{1}{(k-1)!}
    \end{align*}
    as $\wtlambda \rightarrow \infty$.

    For \eqref{eqn:organize2}, note that
    $$
    \E[\mcTk^m|E]\P(E) + \E[\mcTk^m|E^c]\P(E^c) = \E[\mcTk^m] \quad \Rightarrow \quad \frac{\E[\mcTk^m|E]\P(E) -\E[\mcTk^m]}{-\E[\mcTk^m|E^c]\P(E^c)} = 1
    $$
    Looking up or computing the moments of an Erlang distribution for $\E[\mcTk^m]$ and using \eqref{eqn:organize1} yields the desired result.

    For \eqref{eqn:organize4}, by definition of the Erlang distribution,
    \begin{align*}
        \E&\left[\left(\frac{\mcTk}{\wtlambda q}\right)^{m-1}\int_0^{\mcTk/(\wtlambda q)} f(s) ds \middle| E \right]\P(E)\\ 
        &= \int_0^{\wtlambda q \delta}\frac{t^{k-1}e^{-t}}{(k-1)!}\left(\frac{t}{\wtlambda q}\right)^{m-1}\int_0^{t/(\wtlambda q)} f(s) ds dt\\
        &= (\wtlambda q)^k\int_0^{\delta}\frac{u^{k+m-2}e^{-\wtlambda q u}}{(k-1)!}\int_0^{u} f(s) ds du
    \end{align*}
    We now plug in both cases of short time behavior $f$.
    
    In the case $f(s) = \alpha t^p$, we simplify to
    $$
    \alpha(\wtlambda q)^k\int_0^{\delta}\frac{u^{k+m + p-1}e^{-\wtlambda q u}}{(p+1)(k-1)!} du
    $$
    Applying Watson's lemma yields
    $$
    \lim_{\wtlambda \rightarrow \infty}\frac{\E\left[\left(\frac{\mcTk}{\wtlambda q}\right)^{m-1}\int_0^{\mcTk/(\wtlambda q)} \alpha s^p ds \middle| E \right]\P(E)}{\alpha \frac{\Gamma(k+m+p)}{(\wtlambda q)^{m+p} (p+1) (k-1)!}} = 1
    $$
    
    In the case of $f(s) = \alpha \ln(1/t) t^p$, we simplify to
    $$
    \frac{\alpha (\wtlambda q)^k}{(k-1)!}\left[\int_0^{\delta}\frac{u^{k+m+p-1}e^{-\wtlambda q u}}{(p+1)^2} du + \int_0^{\delta}\frac{u^{k+m+p-1} \ln(1/u)e^{-\wtlambda q u}}{p+1} du \right]
    $$
    The behavior of the first term is immediate from the work where $f(s) = \alpha s^p$ and is
    $$
    \lim_{\wtlambda \rightarrow \infty}\frac{\frac{\alpha (\wtlambda q)^k}{(k-1)!}\int_0^{\delta}\frac{u^{k+m+p-1}e^{-\wtlambda q u}}{(p+1)^2} du}{\alpha \frac{\Gamma(k+m+p)}{(\wtlambda q)^{m+p} (p+1)^2 (k-1)!}} = 1
    $$
    For the second term, we note
    \begin{align*}
        \frac{\alpha (\wtlambda q)^k}{(k-1)!} &\int_0^{\delta}\frac{u^{k+m+p-1} \ln(1/u)e^{-\wtlambda q u}}{p+1} du \\
        &=-\frac{\alpha (\wtlambda q)^k}{(k-1)!(p+1)}\frac{d}{da}\left[\int_0^{\delta}u^{a}e^{-\wtlambda q u} du\right]\Bigg|_{a = k+m+p-1}
    \end{align*}
    Applying Watson's lemma now yields the asymptotic behavior
    \begin{align*}
        -\frac{\alpha (\wtlambda q)^k}{(k-1)!(p+1)}&\frac{d}{da}\left[\frac{\Gamma(a+1)}{(\wtlambda q)^{a+1}}\right]\Bigg|_{a = k+m+p-1}\\
        &= -\frac{\alpha (\wtlambda q)^k}{(k-1)!(p+1)}\frac{d}{da}\left[\frac{\Gamma(a+1)}{(\wtlambda q)^{a+1}}\right]\Bigg|_{a = k+m+p-1}\\
        &= \frac{\alpha \Gamma(k+m+p) }{(k-1)!(p+1)(\wtlambda q)^{m+p}}\left[\ln(\wtlambda q) - \psi(k+m+p)\right]
    \end{align*}
    where $\psi$ is the digamma function. As the $\psi$ term does not depend on $\wtlambda$, we can ignore it. As this second term is asymptotically larger than the first term, we find
    $$
    \lim_{\wtlambda \rightarrow \infty}\frac{\E\left[\left(\frac{\mcTk}{\wtlambda q}\right)^{m-1}\int_0^{\mcTk/(\wtlambda q)} \alpha \ln(1/s)s^p ds \middle| E \right]\P(E)}{\alpha \frac{\Gamma(k+m+p)}{(\wtlambda q)^{m+p} (p+1) (k-1)!}\ln(\wtlambda q)} = 1
    $$
\end{proof}

\begin{proof}[Proof of \eqref{eqn:two_term_moment}]
    For any $\eps > 0$ there exists a $\delta >0 $ such that for all $t<\delta$,
    $$
    \left|\frac{\P(\wtX <t)}{f(t)}  - 1\right| < \eps, \qquad\left|\frac{I(t)}{\int_0^t f(s) ds}  - 1\right| < \eps
    $$
    We also impose that $\delta<1$ and again define $E$ as the event where $\mcTk/(\wtlambda q)<\delta$. Note that by \eqref{eqn:ordering}, $\wtTk < \delta$ when conditioned on $E$. 

    The proof strategy is to note that
    \begin{equation}
        \E[\wtTk^m] = \E[\wtTk^m|E]\P(E) + \E[\wtTk^m|E^c]\P(E^c)
        \label{eqn:condition_by_E}
    \end{equation}
    and use this to show the leading terms of $\E[\wtTk^m]$ come from $\E[\wtTk^m|E]\P(E)$. To do so, note by \eqref{eqn:ordering} and \eqref{eqn:organize1} that 
    $$
    \frac{\E[\wtTk^m|E^c]\P(E^c)}{(\wtlambda q \delta)^{m+k-1}e^{-\wtlambda q \delta}} \leq \frac{\E[\mcTk^m|E^c]\P(E^c)}{(\wtlambda q \delta)^{m+k-1}e^{-\wtlambda q \delta}} \rightarrow \frac{1}{(k-1)!}
    $$
    as $\wtlambda \rightarrow \infty$. The remainder of the proof finds the two leading terms of $\E[\wtTk^m|E]\P(E)$ and shows they decay slower than $\wtlambda^{m+k-1}e^{-\wtlambda q \delta}$.

    By \eqref{eqn:transformation} and how $\delta$ is chosen, we find that conditioned on $E$,
    $$
    \mu_{-}(\wtTk) \leq \frac{\mcTk}{\wtlambda q} \leq \mu_{+}(\wtTk), \qquad \mu_{\pm}(t):= t + (1\pm\eps)\frac{1-q}{q}\int_0^{t} f(s) ds \text{ for }0\leq t<\delta
    $$
    Because $\delta$ was chosen to be less than $1$, then $f(s)$ is positive and therefore $\mu_{\pm}$ is monotonic. This means the inverse exists and
    \begin{equation}
        \mu_{+}^{-1}\left(\frac{\mcTk}{\wtlambda q}\right) \leq \wtTk \leq \mu_{-}^{-1}\left(\frac{\mcTk}{\wtlambda q}\right)
        \label{eqn:bounding_wtTk}
    \end{equation}
    Furthermore, it is straightforward to show
    \begin{align*}
        \mu_{+}^{-1}(t) &\geq t-(1+\eps)\frac{1-q}{q}\int_0^{t} f(s) ds \\
        \mu_{-}^{-1}(t) &\leq t-(1-\eps)\frac{1-q}{q}\int_0^{t} f(s) ds + (1-\eps)\frac{1-q}{q}\int_{t-(1-\eps)\frac{1-q}{q}\int_0^{t} f(u) du}^{t} f(s)ds\\
        &\leq t-(1-\eps)\frac{1-q}{q}\int_0^{t} f(s) ds + \left((1-\eps)\frac{1-q}{q}\right)^2f(t)\int_{0}^{t} f(s)ds
    \end{align*}
    Applying a binomial approximation yields the two terms expansion
    $$
    \mu_{\pm}^{-1}\left(\frac{\mcTk}{\wtlambda q}\right)^m = \left(\frac{\mcTk}{\wtlambda q}\right)^m -(1\pm\eps)m\frac{1-q}{q}\left(\frac{\mcTk}{\wtlambda q}\right)^{m-1}\int_0^{\mcTk/(\wtlambda q)} f(s) ds + \text{h.o.t.}
    $$
    Taking expectations on both sides, multiplying through by $\P(E)$, and shuffling terms yields
    $$
    \lim_{\wtlambda \rightarrow \infty}\frac{\E\left[\mu_{\pm}^{-1}\left(\frac{\mcTk}{\wtlambda q}\right)^m\middle| E \right]\P(E)  - \E\left[\left(\frac{\mcTk}{\wtlambda q}\right)^m\middle| E \right]\P(E)}{-m\frac{1-q}{q}\E\left[\left(\frac{\mcTk}{\wtlambda q}\right)^{m-1}\int_0^{\mcTk/(\wtlambda q)} f(s) ds\middle| E \right]\P(E)} = 1 \pm \eps
    $$
    Applying \eqref{eqn:organize2} and \eqref{eqn:organize4} and noting $\eps$ can be made arbitrarily small yields the desired result.
\end{proof}


\section{\label{sec:checkbackward}Derivation of $S_{\disc}$ in \eqref{eq:Sdisc}}

We now derive $S_{\disc}$ in \eqref{eq:Sdisc}. Let $X(t)$ be the continuous-time Markov chain defined in section~\ref{sec:diff2} and define the FPT to the origin $\tau_0=\inf\{t\ge0:X(t)=0\}$. Define $S_0(t)=0$ and
\begin{align}\label{eq:Sdformula}
    S_d(t)
    &=e^{-2rt}\Big[I_0(2rt)+I_d(2rt)+2\sum_{k=1}^{d-1}I_k(2rt)\Big],\quad d\ge1.
\end{align}
We claim that
\begin{align}\label{eq:claim}
    S_d(t)=\P(\tau_0>t\,|\,X(0)=d\Delta x)\quad \text{for all integers }d\ge0.
\end{align}
Note that $S_{\disc}$ in \eqref{eq:Sdisc} in the main text is the FPT distribution to hit $d\Delta x$ starting from the origin, which is equivalent to \eqref{eq:claim}, since \eqref{eq:claim} is the FPT distribution to hit the origin starting from $d\Delta x$ (the latter is more natural when allowing the starting position to vary, which we do here in the Appendix).

It is immediate that \eqref{eq:claim} holds for $d=0$ since $S_0(t)=0$. For $d\ge1$, we have the desired initial condition $S_d(0)=1$ since $I_0(0)=1$ and $I_k(0)=0$ for all $k\ge1$. To verify \eqref{eq:claim}, it remains to show that $S_d$ satisfies the backward Kolmogorov equations,
\begin{align}\label{eq:backwarddiscrete}
    \frac{\dd}{\dd t}S_d
    =r(S_{d+1}-2S_d+S_{d-1}),\quad \text{for all integers } d\ge1.
\end{align}
Setting $I_k=I_k(2rt)$ to simplify notation and taking the derivative of \eqref{eq:Sdformula} for $d\ge2$ gives
\begin{align*}
    \frac{\dd}{\dd t}S_d
    &=-2rS_d+re^{-2rt}\Big[2I_1+I_{d-1}+I_{d+1}+2I_0+2\sum_{k=1}^{d-2}I_k+2\sum_{k=2}^{d}I_k\Big]\\
    &=-2rS_d+re^{-2rt}\Big[I_0+I_{d-1}+2\sum_{k=1}^{(d-1)-1}I_k+I_0+I_{d+1}+2\sum_{k=1}^{(d+1)-1}I_k\Big]\\
    &=-2rS_d+rS_{d-1}+rS_{d+1},\quad\text{if }d\ge2,
\end{align*}
where we have used that $I_{-1}=I_1$ and the relation
\begin{align*}
    \frac{\dd}{\dd z}I_n(z)
    =\frac{1}{2}(I_{n-1}+I_{n+1}).
\end{align*}
For $d=1$, we have
\begin{align*}
    \frac{\dd}{\dd t}S_1
    =-2rS_1+re^{-2rt}(I_0+I_2+2I_1+0)
    =-2rS_1+r(S_2+S_0).
\end{align*}
Hence, we have verified \eqref{eq:backwarddiscrete}.

\section{\label{sec:simulationdetails}Simulation methods}

We now describe the stochastic simulation methods used in section~\ref{sec:examples}. First, we generate $10^8$ statically exact, independent stochastic realizations of the FPT $\tau$ using the simulation algorithms described in Ref.~\cite{lawley2021pdmp}. From these realizations, we then construct the empirical survival probability $S(t)=\P(\tau>t)$. We then construct the survival probability of the {fFPT} $T_1$ via
\begin{align*}
    \P(T_1>t)
    =\exp\bigg[-\lambda\int_0^t (1-S(s))\,\dd s\bigg].
\end{align*}
The mean of $T_1$ is then computed as the integral,
\begin{align*}
    \E[T_1]
    =\int_0^\infty \P(T_1>t)\,\dd t.
\end{align*}

\section*{Data availability statement}

The data that supports the findings of this study are available from the corresponding author upon reasonable request.

\section*{Author declarations}

The authors have no conflicts to disclose.

\end{document}